\newtheorem{theorem}{Theorem}[section]
\newtheorem{lemma}{Lemma}[section]
\theoremstyle{remark}
\newtheorem{remark}{Remark}[section]
\theoremstyle{definition}
\begin{document}

\title{Positivity of transition probabilities of infinite-dimensional diffusion processes on ellipsoids}

\author{Oxana  Manita}


\maketitle

{\sc Abstract:} We consider diffusion processes in Hilbert spaces with 
constant non-degenerate diffusion operators and show
that, under broad assumptions
on the drift,  the transition  probabilities of the process are positive on ellipsoids
 associated with the diffusion operator. 
This is an infinite-dimensional
analogue of positivity of densities of transition probabilities.
Our results apply to diffusions corresponding to
stochastic partial differential equations. 

$\quad$
~

{\sc Keywords:} diffusion process in Hilbert space; SPDE; support
of distribution; positive density; mild solution; variational solution;
Kolmogorov equation.

$\quad$
~

{\sc Author's address:}  
Faculty of Mechanics and Mathematics, 
Moscow State University, Moscow 119991, 
Russia;  o.manita@lambda.msu.ru.

\section{Introduction }

Let us consider the  stochastic differential equation (SDE)
\begin{equation}
dX_{t}=dW_{t}+(AX_{t}+F(X_{t})dt),\quad X_{0}=\eta \label{eq:sde}
\end{equation}
in a Hilbert space $H$, where $W_t$ is an $H$-valued Wiener 
process with  covariance operator $Q$, having  eigenvectors $\{e_i\}_{i\in \mathbb{N}}$ 
and eigenvalues 
$\{q_i\}_{i\in \mathbb{N}}$,  
 and the corresponding Kolmogorov equation
\begin{equation}
\partial_{t}\mu_{t}=\frac{1}{2}q_{i}\partial_{e_{i}e_{i}}^{2}\mu_{t}-
\partial_{e_{i}}(b^{i}(x)\mu_{t}),\quad\mu_{0}=
\mbox{Law}(\eta)
\label{eq:CP-1}
\end{equation}
for the distributions $\mu_{t}$ of the diffusion process $X_{t}$. 
Here  $b^{i}=\langle A+F,e_{i}\rangle $,   $A$ is
a linear (possibly unbounded) operator and $F$ is some function
on~$H$. Equations of such a form correspond to stochastic
partial differential equations (SPDEs). In typical cases $A$
is an elliptic differential operator.

It is well-known that if the coefficients of the equation are regular
enough and the diffusion matrix is non-degenerate, then the transition
probabilities of the finite-dimensional diffusion process have strictly
positive densities with respect to  Lebesgue measure (see \cite{Regularity}).
  In the non-degenerate case this property is usually derived from the Harnack inequality or from the Girsanov
theorem. Another powerful approach is provided by the seminal result of Strook and Varadhan 
 \cite{StrSup}. In the finite-dimensional case they give a full description of the 
 support of the distribution of the diffusion process
$$
dX_t = \sigma (X_t) \circ dW_t + F(X_t)dt, \quad X_0 = x,
$$
where the SDE is written in the form of Stratonovich. Namely, they showed that 
the support of the distribution of $X_t$ coincides with the closure in the space of continuous functions 
of the set of solutions to the appropriate control problem: the Wiener process is replaced by a smooth path --
 control -- and the SDE turns into an ODE in the Hilbert space). More precisely, they showed that 
 $\mbox{supp }\mbox{Law}(X_t)=\mathcal{S}_t$, where 
\begin{equation}
\mathcal{S}_{t}=\overline{\bigl\{ y_{t}:\, u \mbox{\,\,is piecewise constant}\,\,\mbox{and }
\,\dot{y}=\sigma(y_{s})u+F(y_{s}),\quad y_{0}=x\bigr\} }.
\label{eq:control_pr}\end{equation}
We emphasize that this results doesn't require non-degeneracy of the diffusion (and is interesting mostly 
in the degenerate case).

However, in the infinite-dimensional case the situation is different.
First of all, in the infinite-dimensional case there is no Lebesgue measure.
Therefore, we consider the following property: the measure of
every open set is strictly positive. In the finite-dimensional case
this  holds in case of existence of a strictly positive
density with respect to Lebesgue measure. Even for the best studied
class of measures in the infinite-dimensional spaces -- Gaussian measures
-- this property is not quite trivial (see \cite[Theorem 3.5.1]{gaussian_measures}).
Positivity on open sets  is sometimes called irreducibility of the 
semigroup corresponding to the 
diffusion process (irreducibility of the generator of the process).  
Next, there is no exact analogue of Harnack's inequality in Hilbert spaces 
(for upper bounds see \cite{DaPratoHarnack});
the Girsanov theorem is applicable only in very special cases where
drifts take values in the Cameron--Martin spaces of the corresponding Wiener
processes. Moreover, there are no full analogues of the result of Strook and Varadhan. 
Hence the following question
arises: is the distribution of a non-degenerate diffusion process
in a Hilbert space at time $t$ positive on all open sets (at least
for processes with bounded drifts)? The answer
is positive for linear SDEs of the form
$$
dX_{t}=dW_{t}+AX_{t}dt,\quad X_{0}=x.
$$
This equation admits an explicit solution that is a Gaussian process.
However, in the general case  the solution to \eqref{eq:sde} is not
a Gaussian process. It needs not be even
absolutely continuous with respect to a Gaussian process.

Despite the fact that this question is of considerable interest for SPDEs,
only a few results in this direction  are known.
For some special equations (such as the stochastic
Navier--Stokes equation) this question was studied by diverse methods
(see \cite{AgrKSS,KuksinArmen} and \cite{PardouxMatt}).
We also mention the paper \cite{barbu},
where strict positivity in the above sense was established for the
invariant measure of the stochastic porous medium equation.
 
The problem in the general setting was considered in the book \cite{ErgZabczyk}
 for  Lipschitz continuous  perturbations $F$. 
  The positive result for non-degenerate constant diffusion operators is 
obtained in \cite[Theorem 7.4.2]{ErgZabczyk} by methods of the control theory, inspired 
by the ideas of  Strook and Varadhan \cite{StrSup}.
 However, in this approach it is impossible to drop the 
assumption of the Lipschitz continuity of $F$.

In this paper we study  the question of positivity of the
distribution of non-degenerate diffusion processes on open sets with purely probabilistic 
methods. We consider constant non-degenerate
diffusion operators and drifts that are bounded 
perturbations of linear operators and prove that at every positive time 
the distribution of such a process is
positive on every ellipsoid whose axis are given by the eigenvectors of the
diffusion operator. This means that the distribution  has full topological support 
in the weaker topology in which these ellipsoids are balls.  The main difference
of this result from the above mentioned result in \cite{ErgZabczyk} is that we don't 
assume that the nonlinear term $F$ is Lipschitz continuous.  
   Instead of this, we assume that 
the SDE and the corresponding Kolmogorov equation have unique solutions. 
This is a much milder assumption 
since typically SDEs with non-degenerate diffusions are more regular than ODEs.
Moreover,  due to  the fast development of the field and   new results on well-posedness, this assumption 
 is less and less restrictive.   
 The second difference consists in using purely probabilistic methods without references to the control theory. 

Let us proceed to exact statements.

Let $H$ be a separable real Hilbert space with inner product $\langle \cdot,\cdot\rangle $
and norm $\Vert{\cdot}\Vert$. Fix a positive self-adjoint operator $Q\colon\,H\rightarrow H$
with finite trace and eigenvalues $\{ q_{j}\} _{j\in\mathbb{N}}$.
Set
$$
\mbox{tr\,} Q:=\sum_{j=0}^{\infty}q_{j}<\infty.
$$
We assume that
\begin{equation}
1=q_{1}\geq q_{2}\geq\dots>0.\label{eq:q1}
\end{equation}
 We  assume that we are given an $H$-valued Wiener process $\bigl(W_{t},\, t\in\mathbb{R}_{+}\bigr)$
 on some probability space $\bigl(\Omega,\mathcal{F},\mathbb{P}\bigr)$  with covariance operator $Q$,
i.e. $$
\mathbb{E}\langle W_{t},u\rangle \langle W_{s},v\rangle =
\min\{ t,s\} \cdot\langle Qu,v\rangle .
$$
Let $\bigl(\mathcal{F}_{t},\, t\geq0\bigr)$ be the filtration
generated by this Wiener process. There exist an orthonormal system
$\{ e_{j}\} _{j\in\mathbb{N}}$ in $H$ (see \cite[Proposition 4.3]{Zabczyk})
and a countable set of independent one-dimensional standard Wiener
processes ${\displaystyle (\beta_{t}^{j},\, t\in\mathbb{R}_{+}),\,\, j\in\mathbb{N}}$ on
$\bigl(\Omega,\mathcal{F},\mathbb{P}\bigr)$ that are
$\bigl(\mathcal{F}_{t},\, t\geq0\bigr)$-adapted such that
\begin{equation}
W_{t}=\sum_{j=1}^{\infty}\sqrt{q_{j}}\beta_{t}^{j}e_{j},
\label{eq:wp}
\end{equation}
where the series converges in $L^2$. Define a weighted norm
on $H$ by $$
\Vert x\Vert _{Q}:=\langle Qx,x\rangle ^{1/2}=
\Bigl(\sum_{j=1}^{\infty}q_{j}x_{j}^{2}\Bigr)^{1/2},
\quad x_{j}:=\langle x,e_{j}\rangle
$$
and observe that $\Vert x\Vert_{Q}\leq  \Vert x\Vert $ for each $x\in H$
due to \eqref{eq:q1}. Given $a\in H$ and $R\in\mathbb{R}_{+}$,
set
\begin{align*}
K_{R}(a): & =\{ x\in H:\,\Vert x-a\Vert_{Q}\leq R\} ,
\quad U_{R}(a):=\{ x\in  H:\,\Vert x-a\Vert \leq R\} ;\\
 & \quad K_{R}(0)=:K_{R},\qquad U_{R}(0)=:U_{R}.
 \end{align*}
 The sets $K_{R}(a)$ will be called ellipsoids and the
sets $U_{R}(a)$ will be called balls. The ellipsoid $K_{R}(a)$
contains $U_{R}(a)$, but is not contained in any ball $U_{R^{'}}(a^{'})$
(contrary to the finite-dimensional case).

Let $\mathcal{B}(H)$ denote the $\sigma$-field
of all Borel sets in $H$. Let $\mathcal{P}_{\infty}(H)$ denote the set of all probability
measures on $(H,\mathcal{B}(H))$ with
finite moments of all orders. Let  $\mathcal{V}_{\infty}(H)$ denote  the set of all
$H$-valued random variables with finite moments of all orders.
Finally, let $\mathcal{FC}_{0}^{\infty}(H)$  denote the class of all functions of the form
$\phi(x)=\phi_{0}(x_{1},\ldots,x_{m})$
with some $m\in\mathbb{N}$, where $\phi_{0}$ is an  infinitely smooth function
with  compact support in $\mathbb{R}^{m}$.

\section{SDE with a  bounded  drift }

First we consider the case of a  bounded  drift. This case
is not only interesting in itself, but is also a basis for further
consideration.

Suppose that an $H$-valued random variable $\eta$ and a function $F:\,H\rightarrow H$
are given. 

 On the probability space $\bigl(\Omega,\mathcal{F},\mathbb{P}\bigr)$ consider the following SDE:
\begin{equation}
dX_{t}=dW_{t}+F(X_{t})dt,\qquad X_{0}=\eta.
\label{eq:sde-1}
\end{equation}
 An $\mathcal{F}_{t}$-adapted $ H$-valued process $\bigl(X_{t},\, t\in\mathbb{R}_{+}\bigr)$ is said
to be a strong solution to \eqref{eq:sde-1} if $\mathbb{P}$-a.s. for all
$t\geq0$
\begin{equation}
X_{t}=\eta+W_{t}+\int_{0}^{t}F(X_{s})ds,
\label{eq:diff-pr-1}
\end{equation}
where the last integral is a Bochner integral. In the sequel we shall
consider the distributions $(\mu_{t})_{t\geq0}$ of the
process $\bigl(X_{t},\, t\in\mathbb{R}_{+}\bigr)$, defined by
$$
\mu_{t}(C)=\mathbb{P}(X_{t}\in C),\quad C\in\mathcal{B}(H).
$$
 To the diffusion process \eqref{eq:sde-1} we associate the Cauchy
problem for its distributions
\begin{equation}
\partial_{t}\mu_{t}=\frac{1}{2}q_{i}\partial_{e_{i}e_{i}}^{2}\mu_{t}-
\partial_{e_{i}}(b^{i}(x)\mu_{t}),
\quad\mu_{0}=\nu=\mbox{Law}(\eta),
\label{eq:CP}
\end{equation}
where $b^{i}=\langle F,e_{i}\rangle $. Throughout the
paper we assume that summation over all repeated indices is taken.
A family of probability measures $(\mu_{t})_{t\geq0}$
is said to be a solution to \eqref{eq:CP} if the identity
$$
\int\phi(x)d\mu_{t}-\int\phi(x)d\nu=
\int_{0}^{t}\int\mathcal{L}\phi(x)d\mu_{s}ds,$$
where
$$
	\mathcal{L}\phi=\sum_{i=0}^{\infty}\frac{1}{2}q_i \partial^{2}_{e_i e_i}\phi+\sum_{i=0}^{\infty}b^i \partial_{e_i}\phi,
$$
holds for all $t\geq0$ and all test functions $\phi\in\mathcal{FC}_{0}^{\infty}(H)$.

Further we assume that

(i) $\eta$ is independent of $\bigl(W_{t},\, t\in\mathbb{R}_{+}\bigr)$ and
 $\eta\in\mathcal{V}_{\infty}\bigl(H\bigr)$;

(ii) the function $F$ is bounded, i.e.
$$
\sup_{x\in  H}\Vert F(x)\Vert=F_{*}<+\infty.
$$

(iii) The  equation \eqref{eq:sde-1} has a
 strong solution $X_{t}, t\geq0$ and  $X_{t}\in\mathcal{V}_{\infty}(H)$ for each $t\geq0$.
The problem \eqref{eq:CP} has a unique probability
solution.

Under assumption (iii) the distributions of
the process $X_t$ solve the Cauchy problem \eqref{eq:CP} (see \cite[Section 14.2.2]{Zabczyk}).
This one-to-one correspondence between equations enables us to switch
between probability representations and measures whenever it is convenient.

\begin{theorem} \label{pos_ell} 
Assume ${\rm (i)}$, ${\rm (ii)}$ and ${\rm (iii)}$
hold. Then, for any initial condition $\eta\in\mathcal{V}_{\infty}(H)$ and for  every $T>0$,
 the solution to (\ref{eq:CP}) is strictly
positive on every ellipsoid $K_{R}(a)$:
$$
\mu_{T}(K_{R}(a))>0,\,\,\mbox{or, equivalently, }\mathbb{P}(X_{T}\in K_{R}(a))>0.
$$
\end{theorem}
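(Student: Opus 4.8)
The strategy is to reduce the claim $\mathbb{P}(X_T\in K_R(a))>0$ to a statement about a Gaussian process, exploiting the fact that the drift $F$ is bounded. Fix $T>0$, an ellipsoid $K_R(a)$, and (by shrinking $R$ if necessary) assume we want to hit the slightly smaller ellipsoid. The idea is: because $F$ is bounded by $F_*$, the ``correction'' $\int_0^t F(X_s)\,ds$ has $H$-norm at most $tF_*$ for all $t\le T$; but the norm on $H$ is \emph{stronger} than $\Vert\cdot\Vert_Q$, so this correction is controlled in $\Vert\cdot\Vert_Q$ as well. Hence, roughly, if the purely Gaussian part $\eta+W_t$ ends up close (in $\Vert\cdot\Vert_Q$) to the target and, more importantly, if we can absorb the drift correction, then $X_T\in K_R(a)$. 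The technical device to make ``absorb the drift'' rigorous without Girsanov (which fails here since $F$ need not take values in the Cameron--Martin space) is a localization/stopping-time argument combined with a comparison of two processes on a common probability space.

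\textbf{Key steps.} First I would introduce the free process $Z_t:=\eta+W_t$ and observe, using the Karhunen--Loève expansion \eqref{eq:wp} and the independence of $\eta$ and $W$, that for any $\varepsilon>0$ one has $\mathbb{P}(\Vert Z_T-a\Vert_Q<\varepsilon)>0$: indeed $\Vert Z_T-a\Vert_Q^2=\sum_j q_j(\eta_j+\sqrt{q_j}\beta_T^j-a_j)^2$, the tail $\sum_{j>N}$ is small in probability by $\mathrm{tr}\,Q<\infty$ and finite moments of $\eta$, and the first $N$ coordinates form a genuine finite-dimensional nondegenerate Gaussian (conditionally on $\eta$) whose density is positive everywhere, so the required small-ball event has positive probability on a set of $\eta$'s of positive measure. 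This is the analogue of the Gaussian positivity result \cite[Theorem 3.5.1]{gaussian_measures}. Second, and this is the crux, I would pass from $Z$ to $X$. Writing $X_t-Z_t=\int_0^tF(X_s)\,ds$ gives $\Vert X_T-Z_T\Vert_Q\le\Vert X_T-Z_T\Vert\le TF_*$, so naively $\Vert X_T-a\Vert_Q\le\varepsilon+TF_*$, which only gives hitting a \emph{dilated} ellipsoid. To get the sharp statement one must do better: I would use a change-of-drift argument at the level of the control problem idea but realized probabilistically, i.e. show that the event ``$W$ stays in a thin tube around a suitable smooth path $w(s)$'' has positive probability (Cameron--Martin shift of the Wiener measure, which \emph{is} legitimate for deterministic shifts $w\in$ Cameron--Martin space of $W$), choosing $w$ so that the solution of the controlled equation $\dot y=w+F(y)$, $y_0=\eta$, lands inside $K_{R/2}(a)$; then a Gronwall-type continuity estimate for \eqref{eq:diff-pr-1} in the $\Vert\cdot\Vert_Q$ norm (again using boundedness of $F$, not Lipschitz continuity — here one integrates the difference and uses that $\Vert F(X_s)-F(y_s)\Vert\le 2F_*$ only on a small-probability bad set, while on the good set a bootstrap keeps $X$ near $y$) shows $X_T\in K_R(a)$ on that positive-probability event. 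Third, assemble: the relevant event has positive probability, hence $\mu_T(K_R(a))>0$, and by assumption (iii) this equals the value given by the unique solution of \eqref{eq:CP}.

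\textbf{Main obstacle.} The delicate point is precisely the second step: without Lipschitz continuity of $F$ one cannot run the usual Gronwall comparison between $X$ and the controlled path $y$ directly. I expect the resolution to require a careful stopping-time/localization scheme — stop $X$ when it first leaves a neighborhood of $y$, estimate the probability of early stopping using the Gaussian small-ball bound for $W$ minus its shift, and show self-consistently that with positive probability stopping does not occur before time $T$. An alternative, and perhaps cleaner, route is to avoid comparing with a control path altogether: approximate $F$ by bounded Lipschitz (or smooth cylindrical) functions $F_n\to F$, invoke the known positivity result for Lipschitz drifts (in the spirit of \cite[Theorem 7.4.2]{ErgZabczyk}) for the corresponding solutions $X^n_T$, and then pass to the limit — but this needs stability of the solutions, which is exactly what uniqueness in (iii) should buy, through a compactness/tightness argument for the laws $\mu^n_T$ together with lower semicontinuity of $\nu\mapsto\nu(U)$ on open sets. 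Either way, turning ``boundedness of $F$ plus well-posedness'' into the missing compactness or comparison input is where the real work lies.
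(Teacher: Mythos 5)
Your first ingredient (positive Gaussian small-ball probability for the free process in the $\Vert\cdot\Vert_Q$-norm) is sound, and it does appear in the paper --- but only in a much more modest role: in Lemma~\ref{nu->dalee} it shows that a process started inside $K_{R/2}(a)$ remains in $K_R(a)$ with positive probability for a short time $\tau(R)\sim R/(1+F_*)$, precisely because over such a short window the drift contribution $\tau(R)F_*<R/4$ is genuinely small. Your attempt to use it over the whole interval $[0,T]$ runs into exactly the problem you identify ($TF_*$ need not be small compared with $R$), and neither of your two proposed repairs works under the paper's hypotheses. The Cameron--Martin/control-path route requires (a) solvability of the controlled equation $\dot y = w + F(y)$ together with a control steering it into $K_{R/2}(a)$, and (b) a Gronwall comparison between $X$ and $y$; both need continuity (in practice Lipschitz continuity) of $F$, which is exactly the hypothesis the theorem is designed to avoid (see Remark~\ref{lip}), and the introduction explicitly states that the control-theoretic approach of Da Prato--Zabczyk cannot be adapted to merely bounded measurable $F$. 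Your alternative --- approximate $F$ by Lipschitz $F_n$ and pass to the limit --- requires weak convergence of $\mu^n_T$ to $\mu_T$; uniqueness for the limiting Kolmogorov equation does not supply this, because identifying a weak limit point of $(\mu^n_t)$ as a solution of the limiting equation requires $F_n\to F$ in a sense compatible with the limit measures, which is not available for a general bounded Borel drift. So the crux you flag as the ``main obstacle'' is not resolved, and the proposal does not close.

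The idea you are missing is the paper's Step 1, which replaces any comparison with a controlled path by a contradiction argument: suppose the process avoids a fixed centered ellipsoid $K_R$ for all $t\in(0,T]$, so that $v(\omega,t):=2\Vert X_t\Vert_Q\ge 2R$ a.s. Applying It\^o's formula to the scalar process $\zeta_t=\Vert X_t\Vert^2$, one writes its martingale part as $\int_0^t v\,dw_s$ for a one-dimensional Brownian motion $w$ (L\'evy's characterization), and from $\zeta_t\ge 0$, the boundedness of $F$ and a Gronwall estimate one extracts a deterministic almost-sure lower bound $\int_0^t e^{-s}v\,dw_s\ge -C(N,T)$. Since $v$ is bounded away from zero, the time-changed process $\gamma\mapsto\int_0^{\tau_\gamma}e^{-s}v\,dw_s$ is a genuine Brownian motion, which cannot be bounded below almost surely --- a contradiction. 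This yields hitting of the ellipsoid at \emph{some} $t_0\in(0,T]$ for every $T$; combining it with the short-time persistence of Lemma~\ref{nu->dalee}, the Markov property, and an induction over intervals of length $\tau(R)$ then gives positivity at \emph{every} time. Note that this argument uses only boundedness and measurability of $F$, plus well-posedness, which is exactly what makes the theorem stronger than the Lipschitz-based results you invoke.
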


\begin{remark}\label{never} Equation  \eqref{eq:CP}
is meaningful for any nonnegative finite Borel initial measure $\nu$,
and then the solution is a finite nonnegative Borel measure and preserves
the total mass  $\nu(H)$ of the space. Hence the result
of Theorem \ref{pos_ell} is valid for the Cauchy problem \eqref{eq:CP}
with any finite nonnegative Borel initial measure $\nu$.
\end{remark}

\begin{remark}\label{weak} As it can be seen from the proof, in  ${\rm (iii)}$
instead of existence of a strong solution 
 it sufficies to 
assume only existence of a weak solution which possesses the Markov property. 
 In regular finite-dimensional cases existence of a weak solution, together with uniqueness 
 of distribution, ensures \cite{Krylov_Markov_sel} that it is a Markov process on its probability basis.  
Morever, existence of weak solution is closely related to the solvability of the corresponding 
martingale problem, which, in it's turn, is connected to the well-posedness of the Kolmogorov equation. 
However, the author doesn't know any precise analogues of these results in the 
infinite-dimensional setting. To the author's knowledge, similar results are proved under additional 
assumptions like $m$-dissipativity of the drift or for equations with initial data from a particular 
class (for example, see \cite{Beznea_Boboc_Roeckner}).

\end{remark}

\begin{proof} We split the proof into
several steps.

1. We prove that for each ellipsoid $K_{R}(a)$, each initial
distribution $\nu\in\mathcal{P}_{\infty}(H)$ and each $T>0$, there exists a time $t_{0}\in(0,T]$
such that at $t_{0}$ the solution to the Cauchy problem (\ref{eq:CP})
is strictly positive on $K_{R}(a)$:
\begin{equation}
\mu_{t_{0}}(K_{R}(a))=\mathbb{P}(X_{t_{0}}\in K_{R}(a))>0.
\label{eq:dense_t-1}
\end{equation}

2. We prove that, for each ellipsoid $K_{R}(a)$, there
exists $\tau=\tau(R)>0$ such that for any initial distribution
$\nu\in\mathcal{P}_{\infty}\bigl(H\bigr)$ one has 
$$
\mu_{t}(K_{R}(a))=\mathbb{P}(X_{t}\in K_{R}(a))>0\quad\forall t\in(0,\tau].
$$

3. We prove the assertion of the theorem, i.e., that 
$$
\mu_{t}(K_{R}(a))=\mathbb{P}(X_{t}\in K_{R}(a))>0\quad\forall t>0.
$$

\textbf{Step 1.} First, let us show that for each initial
 measure $\nu\in\mathcal{P}_{\infty}\bigl(H\bigr)$ that
is not Dirac's measure at zero and for each $T>0$, there exists $t_{0}\in(0,T${]}
such that $\mu_{t_{0}}(K_{R})>0$. It suffices to prove
this assertion for initial measures with $\mbox{supp}\,\nu\subset U_{N}\backslash K_{\delta}$
for some $N>\delta>0$. Indeed, assume that \eqref{eq:dense_t-1} holds
for every initial measure supported in $U_{N}\backslash K_{\delta}$.
The continuity of $\nu$ at zero yields that there is $\delta>0$
such that $\nu(H\backslash K_{\delta})>0$. Since
$$
U_{N}\subset U_{N+1}\quad\mbox{and }\quad\bigcup_{N=1}^{\infty}U_{N}=H,
$$
there is an index $N_{0}$ such that $\nu(U_{N_{0}}\backslash K_{\delta})>0$.
Define measures $\nu_{0}$ and $\nu^{\bot}$ by
$$
\nu_{0}(E)=\nu(E\cap(U_{N_{0}}\backslash K_{\delta})),
\quad\nu^{\bot}(E)=\nu(E\backslash(U_{N_{0}}\backslash K_{\delta})).
$$
Then $\nu=\nu_{0}+\nu^{\bot}$. Observe that equation \eqref{eq:CP}
is linear in measure, hence $\mu_{t}=\mu_{t}^{0}+\mu_{t}^{\bot}$,
where $\mu_{t}^{0}$, $\mu_{t}^{\bot}$ are solutions to \eqref{eq:CP}
with initial measures $\nu_{0}$ and $\nu^{\bot}$, respectively. By
Remark \ref{never}, \eqref{eq:dense_t-1} holds for
the family $(\mu_{t}^{0})_{t\geq0}$ with some $t_{0}\in(0,T]$,
thus
$$
\mu_{t_{0}}(K_{R})=
\mu_{t_{0}}^{0}(K_{R})+\mu_{t_{0}}^{\bot}(K_{R})\geq
\mu_{t_{0}}^{0}(K_{R})>0.
$$
Hence we can assume from the very beginning that the initial measure $\nu$ satisfies the condition 
$$
\mbox{supp}\,\nu\subset U_{N}\backslash K_{\delta}\quad\mbox{for some }N>\delta>0.
$$
In particular, $\nu$ can be an atomic measure outside zero. Fix $K_{R}=K_{R}(0)$ and $T>0$.
Let $\eta$ be an $H$-valued
random variable independent of $\bigl(W_{t},\, t\in\mathbb{R}_{+}\bigr)$
 such that $\mbox{Law}(\eta)=\nu$.

Let us show that there exists $t_{0}\in(0,T]$ such that $\mu_{t_{0}}(K_{R})>0$.
We argue by contradiction. Suppose that this is false and $\mu_{t}(K_{R})=0$
for all $t\in(0,T]$. Without loss of generality we can assume that $R<\delta$
and $\mu_{t}(K_{R})=0$ for all $t\in[0,T]$.
In particular, this means that $\mathbb{P}$-a.s. 
$\Vert X_{t} \Vert\geq\Vert X_{t}\Vert_{Q}\geq R$
for all $t\in[0,T]$.

Consider the one-dimensional stochastic process $\zeta_{t}=\Vert X_{t}\Vert ^{2}$.
It is a smooth function of the diffusion process \eqref{eq:diff-pr-1}
and its It{\^o}'s differential can be computed by using It{\^o}'s formula for
$H$-valued processes (see \cite[Theorem 4.32]{Zabczyk}):
$$
d\zeta_{t}=2\langle X_{t},dW_{t}\rangle +
(2\langle X_{t},F(X_{t})\rangle +\mbox{tr\,} Q)dt,
\quad\zeta_{0}=\Vert\eta \Vert^{2}.
$$
In order to simplify the first term in the differential, we observe
that the one-dimensional stochastic process $w=(w_{t},\, t\geq0)$ given by
$$
w_{t}=\int_{0}^{t}\frac{\langle X_{s},dW_{s}\rangle }{\Vert X_{s}\Vert_{Q}}
$$
is a continuous square-integrable $\mathcal{F}_{t}$-martingale
and (see \cite[Theorem 4.27]{Zabczyk}) its quadratic variation equals
$$
\ll w_{t}\gg=\int_{0}^{t}\Phi_{s}\, ds,
$$
where
$$
\Phi_{s}=\Bigl(\frac{X_{s}}{\Vert X_{s}\Vert_{Q}}Q^{1/2}\Bigr)\Bigl(\frac{X_{s}}{\Vert X_{s}\Vert_{Q}}Q^{1/2}\Bigr)^{*}=
\frac{1}{\Vert X_{s}\Vert_{Q}^{2}}\cdot\bigl(X_{s}\, Q^{1/2}\bigr)\bigl(X_{s}\, Q^{1/2}\bigr)^{*}=
\frac{\Vert X_{s}\Vert_{Q}^{2}}{\Vert X_{s}\Vert_{Q}^{2}}=1.
$$
Hence $\ll w_{t}\gg=t$. L{\'e}vy's characterization of the Brownian
motion (see \cite[Chapter 3, Theorem 3.16]{Karatzas}) yields that $w$
is an $\mathcal{F}_{t}$-adapted Wiener process. Thus,
\begin{align}
\zeta_{t} & =\zeta_{0}+\int_{0}^{t}v(\omega,s)dw_{s}+\int_{0}^{t}c(\omega,s)ds,\nonumber \\
 & v(\omega,t):=2\Vert X_{t}\Vert_{Q},\,\,\,
 c(\omega,t):=2\langle X_{t},F(X_{t})\rangle +\mbox{tr\,} Q.
 \label{eq:zeta}\end{align}
 Observe that $v(\omega,t)$ is also a progressively measurable
($\mathcal{F}_{t}$-adapted) process. Since
$$
c(\omega,t)\leq\mbox{tr}Q+\zeta_{t}+\Vert F\Vert^{2}_{\infty}=:\lambda+\zeta_{t},
$$
by the assumption $\zeta_{0}\leq N^{2}$ we have
\begin{equation}
\zeta_{t}\leq\zeta_{0}+\int_{0}^{t}v(\omega,s)dw_{s}+
\int_{0}^{t}(\lambda+\zeta_{s})ds\leq(N^{2}+T\lambda)+
\int_{0}^{t}v(\omega,s)dw_{s}+\int_{0}^{t}\zeta_{s}ds.
\label{eq:2}
\end{equation}
 Letting 
 $$\Psi_{t}:=(N^{2}+T\lambda)+\int_{0}^{t}v(\omega,s)dw_{s},
 $$
we obtain 
\begin{equation}
\zeta_{t}\leq\Psi_{t}+\int_{0}^{t}\zeta_{s}ds.
\label{eq:1-1}
\end{equation}
 Multiplying by $e^{-t}$, we obtain 
 $$
\frac{d}{dt}\biggl(e^{-t}\cdot\int_{0}^{t}\zeta_{s}ds\biggr)\leq e^{-t}\Psi_{t},
\quad\mbox{hence }\quad\int_{0}^{t}\zeta_{s}ds\leq\int_{0}^{t}e^{t-s}\Psi_{s}ds.
$$
 Plugging this estimate into \eqref{eq:2}, we arrive at 
\begin{equation}
0\leq\zeta_{t}\leq\Psi_{t}+\int_{0}^{t}e^{t-s}\Psi_{s}ds
\leq C(N,T)+\int_{0}^{t}v(\omega,s)dw_{s}+
\int_{0}^{t}e^{t-s}\int_{0}^{s}v(\omega,r)dw_{r}ds,
\label{eq:3}\end{equation}
where $C(N,T):=(N^{2}+T\lambda)(1+Te^{T})>0$.
Next, by the integration by parts formula (see \cite[Ex. 4.3]{OksSPE}), we have 
\begin{multline*}
\int_{0}^{t}e^{t-s}\int_{0}^{s}v(\omega,r)dw_{r}ds=
e^{t}\Bigl(-e^{-t}\int_{0}^{t}v(\omega,r)dw_{r}\Bigr)+
e^{t}\int_{0}^{t}e^{-s}d\int_{0}^{s}v(\omega,r)dw_{r}\\=
-\int_{0}^{t}v(\omega,r)dw_{r}+e^{t}\int_{0}^{t}e^{-s}v(\omega,s)dw_{s},
\end{multline*}
hence \eqref{eq:3} implies that for $t\in[0,T]$
\begin{equation}
\int_{0}^{t}e^{-s}v(\omega,s)dw_{s}\geq-C(N,T)\cdot e^{-t}\geq-C(N,T).
\label{eq:lower_bound}
\end{equation}
 By our assumption $v\geq2R$. Fix an arbitrary $t^{*}\in(0,t)$.
Define a random change of time
\begin{equation}
z_{t}:=\int_{0}^{t}e^{-2s}v^{2}(\omega,s)ds\geq t\cdot(2R)^{2}e^{-2T}.
\label{eq:time-change}
\end{equation}
 For each $\gamma\geq0$, set $\tau_{\gamma}:=\inf\{ s\geq0:\, z_{s}=\gamma\}$.
The paths of the process $z_{t}$ are continuous and
the process is bounded from below according to \eqref{eq:time-change}, hence $\tau_{\gamma}$
is a stopping time with respect to the filtration $(\mathcal{F}_{t},\, t\geq0)$. 
Moreover,  $\mathbb{P}(\tau_{\gamma}<+\infty)=1$ and $\tau_{\gamma}<t^{*}$
for each $\gamma<t^{*}\cdot(2R)^{2}e^{-2T}$ with $\mathbb{P}$-probability~$1$. 
The change of time theorem (\cite[Chapter 1, Par. 4, Theorem 3]{Skorohod})
implies that the stochastic process $y=(y_{\gamma},\,\gamma\geq0)$ given by
$$
y_{\gamma}:=\int_{0}^{\tau_{\gamma}}e^{-s}v(\omega,s)dw_{s}
$$
is a Wiener process with respect to the filtration $(\mathcal{F}_{\tau_{\gamma}},\,\gamma\geq0)$.
In particular, the random variable $y_{\gamma}$ has a strictly positive distribution 
density on the real line. On the other hand, $\int_{0}^{t}e^{-s}v(\omega,s)dw_{s}$
is an $\mathcal{F}_{t}$-martingale. It is well-known (see, for example,
\cite[Paragraph 7.2, Microtheorem 3]{Wenz}) that the martingale property
 holds not only for deterministic times, but also
for bounded stopping times: $\mathbb{P}$-a.s. one has 
$$
y_{\gamma}=\int_{0}^{\tau_{\gamma}}e^{-s}v_{s}dw_{s}=
\int_{0}^{\tau_{\gamma}\wedge t}e^{-s}v_{s}dw_{s}=
\mathbb{E}\Bigl(\int_{0}^{t}e^{-s}v_{s}dw_{s}|\,\mathcal{F}_{\tau_{\gamma}}\Bigr)\geq
-C(N,T),
$$
since $\tau_{\gamma}<t^*<t$. This contradiction means that there exists
$t_{0}\in(0,T]$ such that $\mu_{t_{0}}(K_{R})>0$.

Let us now proceed to non-centered ellipsoids. Fix $K_{R}(a)$
with a center $a\in  H$. Let us show that there is $t_{0}\in(0,T]$
such that the solution to (\ref{eq:CP}) is positive on $K_{R}(a)$
for very initial measure $\nu\neq\delta_{a}$. Fix $\nu\neq\delta_{a}$.

Consider the shift $L^{a}\colon\, H\to  H$ defined by
$$
L^{a}x=x+a.
$$
We recall that  the image of a measure
$\rho$ under the mapping $L^{a}$ 
is the measure $L_{*}^{a}\rho$ defined by $L_{*}^{a}\rho(E)=\rho(L^{a}(E)))$
for each measurable set $E\subset H$. 
Then it follows from the
definition that $L^{a}(K_{R})=K_{R}(a)$ and the measures
$\sigma_{t}=L_{*}^{a}\mu_{t}$ satisfy the equation 
$$
\partial_{t}\sigma_{t}=\frac{1}{2}q_{i}\partial_{e_{i}e_{i}}^{2}\sigma_{t}-
\partial_{e_{i}}(b^{i}(x-a)\sigma_{t}),
\quad\sigma_{0}=L_{*}^{a}\nu\neq\delta_{0},
$$
where $b^{i}(\cdot-a)=\langle F(\cdot-a),e_{i}\rangle$.
The drift term $F(\cdot-a)$. Therefore, by the assertion for centered
balls proved above in the case $\sigma_{0}\neq\delta_{0}$, there exists
$t_{0}\in(0,T]$ such that
$$
\mu_{t_{0}}(K_{R}(a))=
\mu_{t_{0}}(L^{a}(K_{R}))\overset{def}{=}
L_{*}^{a}\mu_{t_{0}}(K_{R})>0.
$$
 To complete the proof of this step, we consider $K_{R}(a)$
and $\nu=\delta_{a}$. Note that for $\varepsilon>0$ small enough
$$
K_{R/2}(a+\bar{\varepsilon})\subset K_{R}(a),\quad\bar{\varepsilon}=\varepsilon\cdot e_{1}\in  H.
$$
 Indeed, if $(x_{1}-a_{1}-\varepsilon)^{2}+\sum_{j=2}^{\infty}q_{j}(x_{j}-a_{j})^{2}\leq R^{2}/4$,
then
$$
\sum_{j=1}^{\infty}q_{j}(x_{j}-a_{j})^{2}\leq
2(x_{1}-a_{1}-\varepsilon)^{2}+2\varepsilon^{2}+\sum_{j=2}^{\infty}q_{j}(x_{j}-a_{j})^{2}\leq
R^{2}/2+2\varepsilon^{2}\leq R^{2}
$$
for $\varepsilon^{2}\leq R^{2}/4$. But \eqref{eq:dense_t-1} has already
been proved for $K_{R/2}(a+\bar{\varepsilon})$ and $\nu=\delta_{a}$,
i.e. $\mu_{t_{0}}(K_{R/2}(a+\varepsilon))>0$ for some
$t_{0}\in(0,T]$. By additivity $\mu_{t_{0}}(K_{R}(a))\geq\mu_{t_{0}}(K_{R/2}(a+\bar{\varepsilon)})>0$.

\textbf{Step 2.} Let us prove that for every ellipsoid $K_{R}(a)$,
there exists $\tau=\tau(R)>0$, depending only on $R$
and $\sup$-norm of $F$, such that \emph{for any} initial distribution
$\nu\in\mathcal{P}_{\infty}(H)$ one has 
$$
\mathbb{P}(X_{t}\in K_{R})>0\quad\mbox{for all }t\in(0,\tau(R)],
$$
where $X_{t}$ solves \eqref{eq:sde-1}.

The idea of the proof is quite simple: if the process with 
any  initial distribution at some time $t_{0}$ hits a small ellipsoid
with positive probability, then with positive
probability it stays in a larger ellipsoid during some time, and
this time is determined by the parameters of the ellipsoid. But it
has already been proven that during every small interval of time the
process $X_{t}$ hits every fixed ellipsoid (with positive probability)
at least once. The combination of these facts yields the assertion
of Step 2. Let us proceed to rigorous proofs.

Fix $X_{0}\in\mathcal{V}_{\infty}\bigl(H\bigr)$ and $K_{R}(a)$. Set
$$
\tau(R):=R\cdot\Bigl(6\cdot(1+\sup_{x\in  H}\Vert F(x)\Vert_{Q})\Bigr)^{-1}.
$$

\begin{lemma}\label{nu->dalee} Assume that $\mbox{supp}\,\nu\subset K_{R/2}(a)$.
Then
\begin{equation}
\mathbb{P}(X_{t}\in K_{R}(a))>0\quad\mbox{for all }t\in(0,\tau(R)].
\label{eq:taur}
\end{equation}
\end{lemma}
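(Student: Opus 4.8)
The plan is to find a single event $A$ of positive probability on which the \emph{whole} trajectory $(X_s)_{0\le s\le\tau(R)}$ stays inside $K_R(a)$; this yields $\mathbb{P}(X_t\in K_R(a))>0$ simultaneously for every $t\in(0,\tau(R)]$, which is even stronger than \eqref{eq:taur}. The point is that membership in the ellipsoid $K_R(a)$ is a condition on the weighted norm $\|\cdot\|_Q$ only, so the fact that $K_R(a)$ is unbounded in $\|\cdot\|$ plays no role; the sole consequence of $\mathrm{supp}\,\nu\subset K_{R/2}(a)$ that I would use is that the initial value $X_0=\eta$ satisfies $\|\eta-a\|_Q\le R/2$ almost surely.

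First I would rewrite \eqref{eq:diff-pr-1} as $X_t-a=(\eta-a)+W_t+\int_0^t F(X_s)\,ds$ and estimate in the $Q$-norm. Using $\|\cdot\|_Q\le\|\cdot\|$ (which holds because $\|x\|_Q=\|Q^{1/2}x\|$ and $Q^{1/2}$ has operator norm $\sqrt{q_1}=1$) and $\bigl\|\int_0^t F(X_s)\,ds\bigr\|_Q\le\int_0^t\|F(X_s)\|_Q\,ds\le t\,\sup_{x\in H}\|F(x)\|_Q$, one obtains for every $t\ge 0$
$$
\|X_t-a\|_Q\;\le\;\|\eta-a\|_Q+\|W_t\|_Q+t\,\sup_{x\in H}\|F(x)\|_Q .
$$
For $t\le\tau(R)$ the last term is at most $\tau(R)\,\sup_{x\in H}\|F(x)\|_Q<R/6$, directly from the definition $\tau(R)=R\bigl(6(1+\sup_{x}\|F(x)\|_Q)\bigr)^{-1}$, while $\|\eta-a\|_Q\le R/2$ a.s.\ by the previous paragraph. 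Hence, on the event
$$
A:=\Bigl\{\,\sup_{0\le s\le\tau(R)}\|W_s\|<\tfrac{R}{6}\,\Bigr\},
$$
one has $\|X_t-a\|_Q< R/2+R/6+R/6=5R/6<R$ for every $t\in[0,\tau(R)]$, i.e.\ $X_t\in K_R(a)$.

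The one step that is not just bookkeeping, and that I expect to be the main point, is the positivity $\mathbb{P}(A)>0$. This is a standard small--ball property of the Wiener process: $(W_s)_{0\le s\le\tau(R)}$ is a centered Gaussian random element of the Banach space $C([0,\tau(R)];H)$, so its mean, the zero path, lies in the topological support of its distribution, whence every neighbourhood of the zero path --- in particular $A$ --- has positive probability (cf.\ \cite{gaussian_measures}). Alternatively one may truncate the series \eqref{eq:wp}: the partial sum $\sum_{j\le N}\sqrt{q_j}\,\beta^j_t e_j$ defines a continuous $H$-valued process started at the origin, hence has small supremum norm over $t\in[0,\tau(R)]$ with positive probability, while the tail $W_t-\sum_{j\le N}\sqrt{q_j}\,\beta^j_t e_j$ has squared norm $\sum_{j>N}q_j(\beta^j_t)^2$ and Doob's inequality gives $\mathbb{E}\sup_{t\le\tau(R)}\sum_{j>N}q_j(\beta^j_t)^2\le 4\tau(R)\sum_{j>N}q_j$, which is made arbitrarily small by choosing $N$ large since $\mathrm{tr}\,Q<\infty$; independence of the two parts then finishes it. Granting $\mathbb{P}(A)>0$, we get $\mathbb{P}(X_t\in K_R(a))\ge\mathbb{P}(A)>0$ for all $t\in(0,\tau(R)]$, which is \eqref{eq:taur}; the only remaining care is to keep the constants $R/2$, $R/6$, $R/6$ consistent with the chosen $\tau(R)$ and to invoke $\|\cdot\|_Q\le\|\cdot\|$ when bounding the (finite-trace) Wiener increment.
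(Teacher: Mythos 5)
Your proof is correct and follows the same basic strategy as the paper's: decompose $X_t-a$ into the initial condition, the Wiener term and the drift integral, bound the drift integral deterministically (pathwise) by $R/6$ in the $Q$-norm using the definition of $\tau(R)$, and reduce the claim to positivity of a Gaussian measure on a ball, exploiting throughout that membership in $K_R(a)$ only constrains $\Vert\cdot\Vert_Q$ and that $\Vert\cdot\Vert_Q\le\Vert\cdot\Vert$. The one genuine divergence is the treatment of the Wiener term: the paper fixes $t$ and uses only that the marginal $W_t$ is a nondegenerate centered Gaussian vector in $H$, so that $\mathbb{P}(\Vert W_t\Vert\le R/4)>0$ by \cite[Theorem 3.5.1]{gaussian_measures}, whereas you control $\sup_{0\le s\le\tau(R)}\Vert W_s\Vert$, which requires the small-ball (support) property of the Wiener path viewed as a Gaussian element of $C([0,\tau(R)];H)$ --- or your truncation-plus-Doob argument, which is also sound. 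This buys you a strictly stronger conclusion, namely a single event of positive probability on which the \emph{entire} trajectory up to time $\tau(R)$ stays in $K_R(a)$, at the price of a slightly heavier Gaussian input; for the lemma as stated the paper's pointwise version suffices and is marginally cheaper to justify. Your constants ($R/2+R/6+R/6<R$) are consistent with the chosen $\tau(R)$, so there is no gap.
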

{\bf Proof of Lemma \ref{nu->dalee}.}  
Recall that 
$$X_{t}=X_{0}+W_{t}+\int_{0}^{t}F(X_{s})ds.
$$
Obviously, it suffices to show that for all $t\in(0,\tau(R)]$
$$
\mathbb{P}\Bigl(\Vert X_{t}-X_{0}\Vert_{Q}>R/2\Bigr)<1.
$$
 This follows from the properties of $H$-valued Wiener processes
and the definition of $\tau(R)$. Indeed,
\begin{multline}
\mathbb{P}\Bigl(\Bigl\Vert X_{t}-X_{0}\Bigr\Vert_{Q}>\frac{R}{2}\Bigr)=\mathbb{P}\Bigl(\Bigl\Vert W_{t}+
\int_{0}^{t}F(X_{s})ds\Bigr\Vert_{Q}>\frac{R}{2}\Bigr)\\
\leq\mathbb{P}\Bigl(\Vert W_{t}\Vert_{Q}>\frac{R}{4}\Bigr)+
\mathbb{P}\Bigl(\Bigl\Vert \int_{0}^{t}F(X_{s})ds\Bigr\Vert_{Q}>\frac{R}{4}\Bigr).
\label{eq:2ver}\end{multline}
By the properties of the Bochner integral and the definition of $\tau(R)$ we have
$$
\Bigl\Vert\int_{0}^{t}F(X_{s})ds\Bigr\Vert_{Q}\leq\int_{0}^{t}\Vert F(X_{s})\Vert_{Q}ds\leq
\tau(R)\cdot\sup_{x\in  H}\Vert F(x)\Vert_{Q}\leq\frac{R}{6}<\frac{R}{4},
$$
i.e. the second probability on the right-hand side of \eqref{eq:2ver}
equals zero. Hence
$$
\mathbb{P}(\Vert X_{t}-X_{0}\Vert_{Q}>\frac{R}{2})\leq\mathbb{P}(\Vert W_{t}\Vert_{Q}>\frac{R}{4})\leq
\mathbb{P}(\Vert W_{t} \Vert>\frac{R}{4}).
$$
 The distribution of $W_{t}$ at time $t$ is a centered $H$-valued
Gaussian random variable with variance $t\cdot Q$. By \cite[Theorem 3.5.1]{gaussian_measures}
the probability on the right-hand side of the last inequality is strictly
less than $1$. This completes the proof of Lemma \ref{nu->dalee}.
\hfill{}$\square$

Let us return to the proof of Step 2. Fix $\delta\in(0,\tau(R))$.
According to Step 1 there exists a time $t_{0}\in(0,\delta)$
such that ${\displaystyle \mu_{t_{0}}(K_{R/2}(a))>0}$.
By the Markov property
$$
X_{t}=X_{t_{0}}+W_{t}^{1}+\int_{t_{0}}^{t}F(X_{s})ds,\quad t\geq t_{0}
$$
where $W^{1}=(W_{t}-W_{t_{0}},\, t\geq t_{0})$ is also
a $Q$-Wiener process. By our choice of $t_{0}$ we have 
$$
\mathbb{P}(X_{t_{0}}\in K_{R/2}(a))=\mu_{t_{0}}(K_{R/2}(a))>0.
$$
 Arguing similarly to Step 1 and applying Lemma \ref{nu->dalee},
we obtain 
$$
\mathbb{P}(X_{t}\in K_{R}(a))=
\mu_{t}(K_{R}(a))>0
\quad\forall t\in[t_{0},t_{0}+\tau(R)].
$$
 In particular, this holds for all $t\in[\delta,\tau(R)]$,
but $\delta$ is an arbitrary number in $(0,\tau(R)]$, hence
\begin{equation}
\mathbb{P}(X_{t}\in K_{R}(a))=
\mu_{t}(K_{R}(a))>0
\quad\mbox{for all }t\in(0,\tau(R)].
\label{eq:mal_otr}
\end{equation}

\textbf{ Step 3.} Fix an arbitrary time $M$. Split the interval
$[0,M]$ into $n:=[M/\tau(R)]$ parts,
where $\tau(R)$ is defined by \eqref{eq:taur}:
$$
[0,M]=\bigcup_{i=0}^{n-1}[s_{i},s_{i+1}],\quad
 s_{j}=j\cdot\tau(R),\,\, j=0,\dots,n-1,\quad s_{n}=M.
$$
 By the previous step, for any initial data $\eta\in\mathcal{V}_{\infty}\bigl(H\bigr)$,
the assertion of Theorem \ref{pos_ell} holds on $[0,\tau(R)]\equiv[s_{0},s_{1}]$,
i.e. \eqref{eq:mal_otr}. Similarly to the Step 2, we have
$$
X_{t}=X_{s_{1}}+W_{t}^{2}+\int_{s_{1}}^{t}F(X_{s})ds,\quad t\geq s_{1}.
$$
 Application of the result of Step 2 gives that 
 $\mathbb{P}(X_{t}\in K_{R}(a))=\mu_{t}(K_{R}(a))>0$
for $t\in(s_{1},s_{2}]$. By induction we get
$$
\mu_{t}(K_{R}(a))>0\quad\mbox{for all }t\in(0,M].
$$
 This completes the proof of Theorem \ref{pos_ell}.
 \end{proof}
 
 \begin{remark}\label{Novikov} If $u(\omega,t)=e^{-t}v(\omega,t)$
is not separated from zero, then, generally speaking, \eqref{eq:lower_bound}
does not yield a contradiction. This can be shown by a simple
example (suggested by  A.A. Novikov). Consider $u(\omega,t)=\exp\{ w_{t}-t/2\} >0$
$\mathbb{P}$-a.s., where $w_{t}$ is a standard Wiener process on the real
line. Obviously, there is no positive $R$ such that $\mathbb{P}$-a.s. $u(\omega,s)\geq R$.
It{\^o}'s formula implies
$$
u(\omega,t)=1+\int_{0}^{t}u(\omega,s)dw_{s}>0,\quad
\mbox{hence\,\,}\int_{0}^{t}u(\omega,s)dw_{s}>-1\quad\mathbb{P}-\mbox{a.s.}
$$
\end{remark}

\begin{remark}\label{lip} The assumption (iii) is fulfilled, for example, 
if $F$ is Lipschitz continuous. Equation \eqref{eq:sde-1} has a
unique strong solution $X_{t}, t\geq0$ due to  \cite[Theorem 7.2]{Zabczyk} and 
 $X_{t}\in\mathcal{V}_{\infty}(H)$ for each $t\geq0$. The problem \eqref{eq:CP} has a 
 unique probability
solution by virtue of \cite[Theorem 1]{parab_inf} and \cite[Theorem 2.1]{AnApproach}.
However, Theorem  \ref{pos_ell} is in a sense stronger than \cite[Theorem 7.4.2]{ErgZabczyk}, mentioned in the 
Introduction, where irreducibility of the corresponding semigroup is demonstrated, because 
it does not require any continuity of the nonlinear perturbation. 
\end{remark}

\section{SDE with unbounded drift }

We now proceed to the general case -- SDE \eqref{eq:sde} with an
unbounded self-adjoint negative linear operator $A$:
\begin{equation}
dX_{t}=dW_{t}+(AX_{t}+F(X_{t}))dt,\quad X_{0}=\eta.
\label{eq:gen_eq}
\end{equation}
 Here, as above, $\bigl(W_{t},\, t\in\mathbb{R}_{+}\bigr)$ is a $Q$-Wiener process 
 on $\bigl(\Omega,\mathcal{F},\mathbb{P}\bigr)$ with the natural
filtration $(\mathcal{F}_{t},\, t\geq0)$.  Set $B(x)=Ax+F(x)$.

Let us now recall the concept of variational solution (see \cite{ConciseCourse}).

Consider the Banach space $V:=D((-A)^{1/2})$ equipped
with the graph norm of $(-A)^{1/2}$ and its dual space
$V^{*}$. Then  $(V,H,V^{*})$ is  a Gelfand
triple, i.e.  $V\subset  H\subset V^{*}$ and the embeddings are
continuous and dense. Let us consider the Friedrichs extension $A_1$
of $A$. Then $A_1\colon\, V\to V^{*}$ and $A_1$ is
also a densely defined negative self-adjoint operator (see, for example,
\cite[Theorem 2.23]{Kato}). Set
$B_1(\cdot):=A_1+F(\cdot)\colon\, V\to V^{*}$.
For notational simplicity, further we omit indices,
and $A$ will denote not only the operator, but also its Friedrichs extension,
and also $B(\cdot)=A+F(\cdot)$.

A continuous $H$-valued $\mathcal{F}_{t}$-adapted process $X=\bigl(X_{t},\, t\in[0,T]\bigr)$
is called a variational solution to \eqref{eq:gen_eq} if for its
$dt\times\mathbb{P}$-equivalence class $\hat{X}$ with some $\alpha\geq1$
we have
$\hat{X}\in
L^{\alpha}([0,T]\times\Omega,dt\times\mathbb{P};V)
\cap L^{2}([0,T]\times\Omega,dt\times\mathbb{P};H)$
and $\mathbb{P}$-a.s.
\begin{equation}
X_{t}=\eta+W_{t}+\int_{0}^{t}B(\bar{X}_{s})ds,\quad t\in[0,T],
\label{eq:var-sol}
\end{equation}
where $\bar{X}$ is any $\mathcal{F}_{t}$-adapted $V$-valued $dt\times\mathbb{P}$-version
of $\hat{X}$. Moreover, the integrand in \eqref{eq:var-sol} is automatically 
$H$-valued (see, for example, \cite[Remark 4.2.2]{ConciseCourse}).
Below we set $\alpha=2$.

Along with assumptions (i) and (ii) from the previous section, 
we shall need the following assumptions:

(iii') The
problem \eqref{eq:CP} has
a unique probability solution. The equation \eqref{eq:gen_eq} has a 
variational solution (see \cite{KRoz}) and
\begin{equation}
\mathbb{E}\sup_{t\in[0,T]}\Vert X_{t}\Vert^{2}<+\infty.
\label{eq:mom2}
\end{equation} 

(iv) The domain $D(A)\subset  H$ of the linear operator
$A$ is dense in $H$ and $A$ is self-adjoint and negative (i.e.
$\langle Ax,x\rangle \leq-\varepsilon\Vert x^{2} \Vert$ for some $\varepsilon>0$
and all $x\in H$).

The Hille--Yosida theorem (see, for example, \cite[Theorem 2.6]{KurtzEthier})
states that any linear operator $A$ with properties (iv) generates
a contracting strongly continuous semigroup $S_{t},\, t\in\mathbb{R}_{+}$
of linear transformations of $H$.

A continuous $\mathcal{F}_{t}$-adapted $H$-valued process $X=\bigl(X_{t},\, t\in[0,T]\bigr)$
is said to be a mild solution to \eqref{eq:gen_eq} (see, for example,
\cite{Zabczyk,ConciseCourse}) if $\mathbb{P}$-a.s. for all $t\in[0,T]$ one has 
\begin{equation}
X_{t}=S_{t}\eta+\int_{0}^{t}S_{t-s}I\, dW_{s}+\int_{0}^{t}S_{t-s}F(X_{s})ds.
\label{eq:diff-pr}
\end{equation}
 Here  $I$ is the
identity operator on $H$; the last integration is in Bochner's sense.

 The distributions
of the process $X_t$ solve \eqref{eq:CP} with $b^{i}=\langle B,e_{i}\rangle $
(see \cite[Section 14.2.2]{Zabczyk}).   As above, this one-to-one correspondence
enables us  to consider measures in placed of processes and vice versa, whenever this is convenient.

The  main result of this section is the following theorem. 

\begin{theorem} \label{lin_pos} Assume that ${\rm (i)}$, ${\rm (ii)}$, ${\rm (iii')}$
and ${\rm (iv)}$ hold. Then, for any initial condition $\eta\in\mathcal{V}_{\infty}(H)$ and 
for every  $t\in(0,T]$, the solution to
(\ref{eq:CP}) is strictly positive on each ellipsoid $K_{R}(a)$:
$$
\mu_{t}(K_{R}(a))>0,\,\,\mbox{or, equivalently, }\mathbb{P}(X_{t}\in K_{R}(a))>0.
$$
\end{theorem}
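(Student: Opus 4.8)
The plan is to reduce Theorem~\ref{lin_pos} to Theorem~\ref{pos_ell} by a perturbation/stopping-time argument that handles the unbounded linear part $A$ separately. The key observation is that the unbounded operator $A$, being the generator of a contraction semigroup $S_t$ (by assumption (iv) and Hille--Yosida), cannot ``push the process out'' of a large ball faster than it was going in; more precisely, $\|S_t x\|\le\|x\|$, so the drift contribution from $A$ never increases the norm. The difficulty is that $A$ is unbounded, so we cannot directly invoke Step~1 of the proof of Theorem~\ref{pos_ell}, where boundedness of the drift was used to bound $c(\omega,t)=2\langle X_t,F(X_t)\rangle+\operatorname{tr}Q$. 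However, the term $\langle X_t,AX_t\rangle$ that would appear in the It\^o differential of $\zeta_t=\|X_t\|^2$ is \emph{nonpositive} by (iv), so in fact it only helps: in the inequality $c(\omega,t)\le \operatorname{tr}Q+\zeta_t+\|F\|_\infty^2$ one simply drops the $\langle X_t,AX_t\rangle$ term.

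First I would establish, for the variational solution, the It\^o formula for $\zeta_t=\|X_t\|^2$. This is standard for variational solutions (the It\^o formula in the Gelfand triple $(V,H,V^*)$, see \cite{ConciseCourse}): one gets
\[
d\zeta_t = 2\langle X_t,dW_t\rangle + \bigl(2{}_{V^*}\langle B(X_t),X_t\rangle_V + \operatorname{tr}Q\bigr)dt,\qquad \zeta_0=\|\eta\|^2,
\]
and since ${}_{V^*}\langle AX_t,X_t\rangle_V\le -\varepsilon\|X_t\|^2\le 0$, the drift coefficient is bounded above by $\operatorname{tr}Q+\|F\|_\infty^2+\zeta_t$ exactly as in \eqref{eq:2}. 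From this point the argument of Step~1 of Theorem~\ref{pos_ell} goes through verbatim: the martingale $w_t=\int_0^t\langle X_s,dW_s\rangle/\|X_s\|_Q$ is again a Brownian motion by L\'evy's characterization (its quadratic variation is still $t$ since the $Q$-structure of the noise is unchanged by adding a drift), $v(\omega,t)=2\|X_t\|_Q\ge 2R$ under the contradiction hypothesis $\mu_t(K_R)=0$, and the random-time-change argument yields the same contradiction. So Step~1 of Theorem~\ref{lin_pos} is obtained with no essential change, using \eqref{eq:mom2} to ensure all the integrals make sense.

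Next, for Steps~2 and~3 I would work with the mild form \eqref{eq:diff-pr}. For the analogue of Lemma~\ref{nu->dalee}, note that for $X_0$ supported in $K_{R/2}(a)$ and $t$ small,
\[
X_t - X_0 = (S_t-I)X_0 + \int_0^t S_{t-s}\,dW_s + \int_0^t S_{t-s}F(X_s)\,ds,
\]
and we must show $\mathbb{P}(\|X_t-X_0\|_Q>R/2)<1$ for $t\le\tau(R)$ with $\tau(R)$ depending only on $R$ and $\|F\|_\infty$ (and possibly on the modulus of strong continuity of $S_t$ near $0$, but in $\|\cdot\|_Q$ one can perhaps avoid even this). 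The deterministic drift term is again bounded by $\tau(R)\|F\|_\infty<R/6$ in $\|\cdot\|_Q$ using $\|S_{t-s}\|\le 1$; the term $(S_t-I)X_0$ is small for small $t$ by strong continuity — here one should be slightly careful, because strong continuity gives smallness pointwise in $X_0$, not uniformly, so I would instead keep this term by enlarging the target ellipsoid or by first conditioning on $\|X_0\|\le N$ and using a union bound over $N$, exactly as in the reduction at the start of Step~1 of Theorem~\ref{pos_ell}; and the stochastic convolution $\int_0^t S_{t-s}\,dW_s$ is a centered Gaussian random variable in $H$ whose covariance has trace $\le t\operatorname{tr}Q$, so by \cite[Theorem 3.5.1]{gaussian_measures} the probability it exceeds any fixed positive level is strictly less than $1$. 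The Markov property of the variational/mild solution then lets Steps~2 and~3 be repeated word for word: hitting a small ellipsoid at some $t_0\in(0,\delta)$ (Step~1), then staying in the larger one for time $\tau(R)$, then chaining over $[0,M]$.

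The main obstacle I expect is the bookkeeping around the unbounded operator: making sure the It\^o formula for $\|X_t\|^2$ is legitimately applicable to the variational solution (this is where (iii$'$) and \eqref{eq:mom2} are needed, and where one must pass between the $V$--$V^*$ duality and the $H$ inner product), and handling the $(S_t-I)X_0$ term in the small-time estimate without assuming uniform strong continuity. Neither is a deep difficulty, but both require care; once they are dispatched, the positivity-of-Gaussian-measure input \cite[Theorem 3.5.1]{gaussian_measures}, the random time change, and L\'evy's characterization do all the real work, precisely as in Theorem~\ref{pos_ell}. In fact, the cleanest exposition is probably to state explicitly that the proof is identical to that of Theorem~\ref{pos_ell} except for (a) replacing $F$-boundedness by the combination ``$F$ bounded {\it plus} $\langle Ax,x\rangle\le 0$'' in the estimate of the drift of $\zeta_t$, and (b) replacing $W_t$ and the Bochner drift integral by their mild-solution counterparts $\int_0^t S_{t-s}\,dW_s$ and $\int_0^t S_{t-s}F(X_s)\,ds$ together with the contraction bound $\|S_t\|\le1$, and then carry out only these two modifications in detail.
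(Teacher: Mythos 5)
Your Step~1 matches the paper's argument exactly: It\^o's formula for variational solutions in the Gelfand triple, discarding the nonpositive term $\langle AX_t,X_t\rangle$, and then running the time-change/L\'evy argument verbatim. You also correctly identify that the only other place where the structure of the drift enters is the small-time Lemma~\ref{nu->dalee}, so the whole burden falls on its analogue for the mild solution. That is where your proposal has a genuine gap.

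The problem is the term $(S_t-I)X_0$. You flag it, but none of your proposed fixes works. The initial law is supported in the ellipsoid $K_{R/2}(a)$, which is \emph{unbounded} in the $H$-norm (the paper stresses that $K_R(a)$ is contained in no ball), so $(S_t-I)X_0$ admits no bound in terms of $R$, and ``enlarging the target ellipsoid'' cannot absorb it. Restricting to $\{\|X_0\|\le N\}$ gives $\|(S_t-I)X_0\|\le 2N$, but that is not small, and strong continuity of $S_t$ is uniform only on \emph{compact} sets, not on bounded sets of $H$; so no choice of $\tau(R)$ independent of the initial distribution makes $(S_t-I)X_0$ small, and such uniformity of $\tau(R)$ is exactly what the chaining in Steps~2--3 requires. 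The paper's resolution is different in kind: it does not try to bound $(S_t-I)X_0$ at all. It groups it with the stochastic convolution $W_A(t)=\int_0^tS_{t-s}\,dW_s$, observes that these two are independent, that $W_A(t)$ is a non-degenerate Gaussian, and writes
\begin{equation*}
\mathbb{P}\Bigl(\bigl\Vert (S_t-I)X_0+W_A(t)\bigr\Vert\le \tfrac{R}{4}\Bigr)
=\int_H \rho_t\bigl(U_{R/4}(0)-w\bigr)\,\sigma_t(dw)>0,
\end{equation*}
because by \cite[Theorem 3.5.1]{gaussian_measures} the non-degenerate Gaussian measure $\rho_t$ is strictly positive on the ball $U_{R/4}(-w)$ for \emph{every} center $w$, not just $w=0$. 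You invoke only the weaker fact that $\mathbb{P}(\|W_A(t)\|>c)<1$, i.e.\ positivity on the centered ball, and then try to control $(S_t-I)X_0$ separately by a union bound; that decomposition fails. To repair your proof you need precisely this independence-plus-convolution step (full topological support of the Gaussian stochastic convolution), which is the one genuinely new idea in the paper's proof of Theorem~\ref{lin_pos} beyond Theorem~\ref{pos_ell}.
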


\begin{proof} The proof mainly repeats the proof of Theorem \ref{pos_ell}.
We consider  only the steps affected by the addition of the linear term.

 Arguing similarly to Step 1 of the proof of Theorem \ref{pos_ell}
and applying It{\^o}'s formula for variational solutions (see \cite[Theorem 4.2.5]{ConciseCourse}),
we obtain the following expression for the process $\zeta_{t}=\Vert X_{t}\Vert^{2}$:
\begin{multline*}
\zeta_{t}=\zeta_{0}+\int_{0}^{t}2\Vert X_{s}\Vert_{Q}dw_{s}+
\int_{0}^{t}(2\langle X_{s},F(X_{s})\rangle +
\mbox{tr\,} Q+2\langle AX_{t},X_{t}\rangle )ds\\
\leq\zeta_{0}+\int_{0}^{t}2\Vert X_{s}\Vert_{Q}dw_{s}+
\int_{0}^{t}(2\langle X_{s},F(X_{s})\rangle +\mbox{tr\,} Q)ds.
\end{multline*}
where we used the estimate $\langle Ax,x\rangle \leq0$. Similarly
to the derivation of the bound \eqref{eq:lower_bound}, we obtain 
$$
\int_{0}^{t}e^{-s}v(\omega,s)dw_{s}\geq-C,\qquad v(\omega,s):=2\Vert X_{s}\Vert_{Q}.
$$
 Step 1 is completed in exactly the same way as in proof of Theorem
\ref{pos_ell}. Next, we observe that the structure of the drift term
in the proof of Theorem \ref{pos_ell} has only been used in Lemma
\ref{nu->dalee}. Therefore, to complete the proof of Theorem \ref{lin_pos}
it suffices to prove an analogue of Lemma \ref{nu->dalee} in the
case $A\neq0$. Fix $X_{0}\in\mathcal{V}_{\infty}\bigl(H\bigr)$ and $K_{R}(a)$. Let
$\nu=\mbox{Law}(X_{0})$. Set
$$
\tau(R):=R\cdot(6\cdot(1+\sup_{x\in  H}\Vert F(x)\Vert))^{-1}.
$$

\begin{lemma}\label{nu->dalee-semigroup} Suppose that $X_{0}$ is independent
of $\bigl(W_{t},\, t\in\mathbb{R}_{+}\bigr)$ and $\mbox{supp}\,\nu\subset K_{R/2}(a)$. Then
\begin{equation}
\mathbb{P}\Bigl(X_{t}\in K_{R}(a)\Bigr)>0\quad\mbox{for all }t\in(0,\tau(R)].
\label{eq:taur-1}
\end{equation}
\end{lemma}
{\bf Proof of Lemma \ref{nu->dalee-semigroup}.} Note that 
the  
variational solution $X_{t}$ is also a mild solution to \eqref{eq:sde} 
(see \cite[F.0.5, F.0.6]{ConciseCourse}),
i.e. $$
X_{t}=S_{t}X_{0}+\int_{0}^{t}S_{t-s}I\, dW_{s}+\int_{0}^{t}S_{t-s}F(X_{s})ds.
$$
Clearly, it suffices to prove that for
all $t\in(0,\tau(R)]$
$$
\mathbb{P}(\Vert X_{t}-X_{0}\Vert_{Q}>R/2)<1,\,\,\mbox{if}\quad\mbox{Law}(X_{0})=\nu.
$$
 We have
 \begin{multline}
\mathbb{P}\Bigl(\Vert X_{t}-X_{0}\Vert_{Q}>\frac{R}{2}\Bigr)\leq\mathbb{P}\Bigl(\Bigl\Vert (S_{t}-I)X_{0}+
\int_{0}^{t}S_{t-s}I\, dW_{s}\Bigr\Vert_{Q}>\frac{R}{4}\Bigr)\\
+\mathbb{P}\Bigl(\Bigl\Vert \int_{0}^{t}S_{t-s}F(X_{s})ds\Bigr\Vert_{Q}>\frac{R}{4}\Bigr).
\label{eq:2ver-1}\end{multline}
 Since the semigroup $S_{t}$ is contracting,
$$
\Bigl\Vert \int_{0}^{t}S_{t-s}F(X_{s})ds\Bigr\Vert_{Q}\leq
\Bigl\Vert\int_{0}^{t}S_{t-s}F(X_{s})ds\Bigr\Vert\leq
\int_{0}^{t}\Vert S_{t-s}F(X_{s})\Vert ds
\leq\tau(R)\cdot\sup_{x\in  H}\Vert F(x)\Vert<\frac{R}{4},
$$
i.e. the second probability on the right-hand side of \eqref{eq:2ver-1}
is zero. Thus,
\begin{multline}
\mathbb{P}\Bigl(\Vert X_{t}-X_{0}\Vert_{Q}>\frac{R}{2}\Bigr)\leq
\mathbb{P}\Bigl(\Bigl\Vert (S_{t}-I)X_{0}+
\int_{0}^{t}S_{t-s}I\, dW_{s}\Bigr\Vert_{Q}>\frac{R}{4}\Bigr)\\
\leq\mathbb{P}\Bigl(\Vert(S_{t}-I)X_{0}+
\int_{0}^{t}S_{t-s}I\, dW_{s}\Vert>\frac{R}{4}\Bigr).
\label{eq:est-pr}\end{multline}
 The process $W_{A}=(W_{A}(t),\, t\geq0)$ given
by $W_{A}(t):=\int_{0}^{t}S_{t-s}I\, dW_{s}$ is called a stochastic
convolution. Since
$$
\int_{0}^{T}\mbox{tr}\, S(r)QS^{*}(r)dr=
\mbox{tr}\int_{0}^{T}\Vert S(r)\Vert_{Q}^{2}dr<\infty,
$$
 $W_{A}$ is an $\mathcal{F}_{t}$-adapted Gaussian random variable,
continuous in mean square, with the non-degenerate covariance operator
${\displaystyle \int_{0}^{t}\Vert S(r)\Vert_{Q}^{2}dr}$ (see \cite[Theorem 5.2]{Zabczyk}).
It can be easily seen that $(S_{t}-I)X_{0}$ and $W_{A}(t)$
are independent random variables. By the convolution formula
\begin{equation}
\mathbb{P}(\Vert(S_{t}-I)X_{0}+
\int_{0}^{t}S_{t-s}I\, dW_{s}\Vert\leq\frac{R}{4})=
\int_{H}\rho_{t}(U_{R/4}(0)-w)\sigma_{t}(dw),
\label{eq:w}
\end{equation}
where $\sigma_{t}=\mbox{Law}(S_{t}-I)X_{0}$ and $\rho_{t}=\mbox{Law}(W_{A}(t))$.
But the integrand is strictly positive by the properties
of the Gaussian random variable $\rho_{t}$ (see \cite[Theorem 3.5.1]{gaussian_measures}),
and $\sigma_{t}$ is a probability measure, hence \eqref{eq:w} is
a strictly positive quantity. Therefore, the right-hand side of \eqref{eq:est-pr}
is strictly less than $1$. This completes the proof of Lemma \ref{nu->dalee-semigroup}
and Theorem \ref{lin_pos}. \hfill{}$\square$
\end{proof}

\section*{Acknowledgements}

The author is grateful to V.I. Bogachev,  A.A. Novikov,
 A.Yu. Veretennikov and  A.D. Manita for fruitful discussions. 
 The work was partially supported by RFBR grants 14-01-00237 and 14-01-00319.

\end{document}